\newtheorem{thm}{Theorem}[section]
\newtheorem{prop}[thm]{Proposition}
\newtheorem{coro}[thm]{Corollary}
\newtheorem{lem}[thm]{Lemma}
\newtheorem{rmk}[thm]{Remark}
\numberwithin{equation}{section}
\newcommand{\N}{{\mathbb{N}}}
\newcommand{\z}{{\mathbb{Z}}}
\newcommand{\ceil}[1]{\left\lceil #1 \right\rceil}
\newcommand{\eq}{\equiv}
\title[Completing an universal $m$-gonal form]{ Completing an universal $m$-gonal form with a closing unary piece }
\author{Dayoon Park}
\address{Department of Mathematics, The University of Hong Kong, Hong Kong}
\email{pdy1016@hku.hk}
\thanks{}
\begin{document}
\maketitle

\begin{abstract}
In this paper, we show that for any $m$-gonal form $F_m(\mathbf x)$ with $m \ge 12$ which represents every positive integer up to $m-4$, by putting together only unary $m$-gonal form, we may complete an universal form.
\end{abstract}

\maketitle
\section{Introduction}
The representation of positive integers by {\it $m$-gonal numbers} which were defined as the number of dots to constitute a regular $m$-gon in BC 2-th Century has been studied for a long time.
The famous Fermat's conjecture which states that every positive integer may be written as a sum of at most $m$ $m$-gonal numbers date back to 17-th Century.
His conjecture was resolved by Lagrange and Gauss for $m=4$ and $m=3$ in 1770 and 1796, respectively.
And finally Cauchy completed its proof for general $m \ge 3$ in 1813.

One may easily derive that the number of dots to constitute a regular $m$-gon with $x$ dots for each side has a formula
\begin{equation}\label{m-number}P_m(x)=\frac{m-2}{2}x^2-\frac{m-4}{2}x. \end{equation}
We particularly call the $m$-gonal number $P_m(x)$ in (\ref{m-number}) as the {\it $x$-th $m$-gonal number}.
Although the original definition for $m$-gonal number admits only positive integer for variable $x$ in (\ref{m-number}),
one may naturally generalize the $m$-gonal number by admitting the $x$ in (\ref{m-number}) zero and negative integer too.
As a general version of Fermat's Conjecture, the author and collaborators \cite{KP} determined the optimal $\ell_m$ satisfying every positive integer is written as a sum of $\ell_m$ generalized $m$-gonal numbers for every $m \ge 3$ except $7$ and $9$ as
$$ \ell_m=
\begin{cases} m-4 & \text{ if } m \ge 10\\
3& \text{ if } m \in \{3,5,6\}\\
4& \text{ if } m \in \{4,8\}. \\
\end{cases}$$

We call a sum of weighted $m$-gonal numbers 
\begin{equation}F_m(\mathbf x)=a_1P_m(x_1)+\cdots+a_nP_m(x_n) \end{equation}
as {\it $m$-gonal form} where $a_i \in \N$.
For an $m$-gonal form $F_m(\mathbf x)$ and a positive integer $N \in \N$, if the diophantine equation 
\begin{equation}F_m(\mathbf x)=N \end{equation}
has an integer solution $\mathbf x \in \z^n$, then we say that $F_m(\mathbf x)$ {\it represents $N$}.
If $F_m(\mathbf x)$ represents every positive integer, then we say that $F_m(\mathbf x)$ is {\it universal}.
To determine the universality of $m$-gonal form is one of the significant problems in the field of number theory and has a long history.
Many mathematicians have been made an effort to classify universal $m$-gonal forms for some specific $m$'s.
Liouville classified all universal ternary triangular forms and Ramanujan classified all universal quaternary square forms.
Ramanujan claimed that there are 55 universal quateranry square forms, but one of them has been distinguised that is not universal.
Willerding claimed there are 124 quaternary non-diagonal universal quadratic form except Ramanujan's list of diagonal universal quadratic form.
But Willerding's list also has been corrected later as the total number of quaternary universal quadratic forms are exactly 214 up to isometry. 

In general, to determine the universality of a form is not easy.
On the other hand, in 1993, Conway and Schneeberger announced a stunning result on determining the universality of quadratic form.
Their result well known as {\it 15-Theorem} states that the representability of positive integers up to only $15$ characterize the universality of quadratic form.
The {\it $15$-Theorem} make readily check the universality of arbitrary quadratic form.
Actually, Willerding's mistake was corrected in virtue of the {\it $15$-Theorem}.
In \cite{BJ}, Kane and Liu claimed such a fineteness theorem holds for any $m$-gonal form for $m \ge 3$.
On the other words, they proved that for any $m \ge 3$, there exists (unique and minimal) $\gamma_m$ for which if an $m$-gonal form $F_m(\mathbf x)$ represents every positive integer up to $\gamma_m$, then $F_m(\mathbf x)$ is universal.
Since the smallest generalized $m$-gonal number is $m-3$ except $0$ and $1$, we may obviously have $m-4 \le \gamma_m$, i.e., $\gamma_m$ is asymptotically increasing value as $m$ increases.
They \cite{BJ} also questioned about the growth of $\gamma_m$ and showed that
$$m-4 \le \gamma_m \ll m^{7+\epsilon}.$$
As an improvement of the result, Kim and the author \cite{KP'} provided the optimal growth of $\gamma_m$ on $m$ which is exactly linear on $m$ by proving the following theorem.
\begin{thm} \label{c}
There exists an absolute constant $C>0$ such that $$m-4 \le \gamma_m \le C(m-2)$$
for any $m \ge 3$.
\end{thm}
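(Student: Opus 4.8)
\emph{Sketch of a proof strategy.}
The lower bound $m-4\le\gamma_m$ is immediate: the $(m{-}5)$-fold sum $P_m(x_1)+\cdots+P_m(x_{m-5})$ represents every integer in $[1,m-5]$ using only the values $P_m(\cdot)\in\{0,1\}$, yet it cannot represent $m-4$, since any nonzero $P_m(x)$ with $|x|\ge 2$ or $x=-1$ is at least $P_m(-1)=m-3>m-4$. So the content is the upper bound, and the plan is as follows: assuming an $m$-gonal form $F_m(\mathbf x)=\sum_{i=1}^n a_iP_m(x_i)$ with $a_1\le\cdots\le a_n$ represents every positive integer up to $C(m-2)$, show it is universal for a suitable absolute $C$ (small $m$ being a finite check, so one may take $m$ large). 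The two ingredients will be a structural reduction to a sub-form of bounded complexity, followed by an analytic (circle method / modular forms) proof that such a sub-form is \emph{almost} universal with an exceptional set of size only $O(m)$.

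First I would exploit that for $N\le m-4$ any representation of $N$ by $F_m$ uses only values in $\{0,1\}$, so $N$ is a subset sum of the $a_i$; hence the multiset $\{a_i\}$ has subset sums covering $[1,m-4]$, which forces $a_1=1$ and tightly constrains the small coefficients. If at least $m-4$ of the $a_i$ equal $1$, then $F_m$ dominates the $(m{-}4)$-fold sum of $P_m$, which is universal by the generalized Cauchy theorem of \cite{KP}, and we are done. Otherwise the unary core is short, and I would run a standard escalator-type argument: with $t_k$ the truant of the section $\sum_{i\le k}a_iP_m(x_i)$, whenever $t_k\le C(m-2)$ the form $F_m$ represents $t_k$ and every value $a_jP_m(x_j)$ occurring there is $\le t_k$, so one may adjoin variables of coefficient $\le C(m-2)$ while strictly increasing the truant. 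The upshot should be a sub-form $\widetilde F_m(\mathbf y)=\sum_{i=1}^r a_iP_m(y_i)$ of controlled rank $r$ — arranged to be $\ge 4$ — with all $a_i\ll m$ that still represents every positive integer up to $C(m-2)$. It then suffices to prove that any such low-complexity $\widetilde F_m$ represents every integer $N$ larger than some $\kappa(m)=O(m)$; choosing $C$ with $C(m-2)\ge\kappa(m)$ (and exceeding the finitely many small exceptions) finishes.

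For the analytic core, completing the square turns $\widetilde F_m(\mathbf y)=N$ into $\sum_{i=1}^r a_iz_i^2=\widetilde N$, where $\widetilde N:=8(m-2)N+(m-4)^2\sum_i a_i$ and $z_i\equiv 4-m\pmod{2(m-2)}$; that is, $\widetilde N$ is represented by a fixed coset of the rank-$r$ lattice $2(m-2)\,\z^r$ with diagonal form $\sum_i a_iz_i^2$, of level $\ll_r m^{O(1)}$. Since $r\ge 4$, the coset theta series is a modular form of weight $r/2\ge 2$, which I would split as $E+g$ with $E$ Eisenstein and $g$ cuspidal, so the number of representations is $a_E(\widetilde N)+a_g(\widetilde N)$, with $a_E(\widetilde N)$ a product of local densities times $\widetilde N^{r/2-1}$ and $|a_g(\widetilde N)|\ll_\epsilon\widetilde N^{r/4+\epsilon}(\mathrm{level})^{\epsilon}\|g\|$ by Deligne's bound. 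A point that should keep $\kappa(m)$ linear is that the local densities cause no trouble at the bad primes $p\mid 2(m-2)$: the congruence $z_i\equiv 4-m$ forces $\sum_i a_iz_i^2\equiv(4-m)^2\sum_i a_i\equiv\widetilde N\pmod{p^k}$, so $\widetilde N$ automatically lands in the correct residue class and the local density there is bounded below by an absolute power of $p$; hence the singular series is $\gg m^{-\epsilon}$ and $a_E(\widetilde N)\gg m^{-\epsilon}\widetilde N^{r/2-1}$.

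The hard part is beating the cuspidal term uniformly in $m$: one needs $\|g\|$, together with the level and divisor factors, to be dominated by the gap $\widetilde N^{r/2-1-r/4-\epsilon}=\widetilde N^{r/4-1-\epsilon}$ already once $\widetilde N\gg m$, whereas the crude bound $\|g\|\ll m^{O(1)}$ with a nontrivial exponent merely reproduces the polynomial estimate $\gamma_m\ll m^{7+\epsilon}$ of \cite{BJ}. Getting linearity should require exploiting the extreme rigidity of $\widetilde F_m$ — very few variables, coefficients confined to a short explicit range, many of them possibly equal — to control the genuinely new cusp-form content of these particular coset theta series, or else replacing the pointwise Deligne bound by a second-moment estimate over $\widetilde N$ running through the relevant arithmetic progression. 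A secondary obstacle is ensuring that the escalation can always be driven into rank $r\ge 4$; if a ternary piece were forced, one would be in weight $3/2$, where spinor-exceptional square classes and the weaker available bounds (Duke--Schulze-Pillot) make a uniform effective statement considerably more delicate, so keeping $r\ge 4$ is itself part of the plan.
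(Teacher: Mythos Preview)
The paper does not actually prove this theorem; its entire proof is the line ``See \cite{KP}'' (apparently a typo for \cite{KP'}, the Kim--Park preprint where the linear bound is established). So there is no in-paper argument to compare your sketch against.

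Judged on its own, your outline follows the template of Kane--Liu \cite{BJ}: escalate to a sub-form of bounded rank with coefficients $O(m)$, pass to a coset theta series of weight $\ge 2$, and split into Eisenstein plus cuspidal parts. The lower bound and the local-density discussion are fine. But the decisive step---bounding the cuspidal contribution uniformly enough in $m$ that representability kicks in once $N\gg m$ rather than $N\gg m^{O(1)}$---is, as you yourself say, not carried out. The crude level-dependent Petersson-norm bound only reproduces the polynomial estimate of \cite{BJ}, and your two proposed escapes (exploiting the very constrained shape of the escalated forms, or replacing Deligne pointwise by a second moment over the relevant progression) are plausible directions but neither is executed. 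The secondary issue you raise, guaranteeing rank $\ge 4$ after escalation so as to avoid weight $3/2$, is also left open.

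So what you have is an honest strategy sketch with the main obstruction correctly located but unresolved; it is not a proof, and the genuine content of the linear upper bound lies precisely in the input from \cite{KP'} that closes this gap.
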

\begin{proof}
See \cite{KP}.
\end{proof}
In effect, $\gamma_m$ is strictly bigger than $m-4$.
In other words, the representability of every positive integer up to $m-4$ by an $m$-gonal form is essential for its universality but does not characterize its universality.

Then how far is it to an $m$-gonal form which represents every positive integer up to $m-4$ from an universal form?
The following theorem (which is the main goal in this paper) may suggest the answer.

\begin{thm} \label{main thm}
\noindent $(1)$
For $m=6$ or $m \ge 12$, let an $m$-gonal form $F_m(\mathbf x)=a_1P_m(x_1)+\cdots+a_nP_m(x_n)$ represents every positive integer up to $m-4$.
Then there is $a_{n+1} \in \N$ for which
$$F_m(\mathbf x)+a_{n+1}P_m(x_{n+1})$$ is universal.

\noindent $(2)$ 
For $m \in \{5,7,8,9,10,11\}$, 
let an $m$-gonal form $F_m(\mathbf x)=a_1P_m(x_1)+\cdots+a_nP_m(x_n)$ represents every positive integer up to $m-4$.
Then there are $a_{n+1}, a_{n+2} \in \N$ for which
$$F_m(\mathbf x)+a_{n+1}P_m(x_{n+1})+a_{n+2}P_m(x_{n+2})$$ is universal.
\end{thm}

\vskip 0.5em

Theorem \ref{main thm} says that for any $m$-gonal form which is escalated up to $m-4$, only one more term would be enough to be universal for $m \ge 12$. 
Throughout this paper, we prove Theorem \ref{main thm} by using the arithmetic theory of quadratic form.
Especially, we concentrate the solvability of the diophantine systems
\begin{equation} \label{equ intro}
\begin{cases}
a_1x_1^2+\cdots+a_nx_n^2=2A+B \\
a_1x_1+\cdots+a_nx_n=B
\end{cases}
\end{equation}
over $\z$ to verify the representability of a positive integer by an $m$-gonal form.
Note that if (\ref{equ intro}) is solvable, then 
\begin{align*}
A(m-2)+B & =a_1P_m(x_1)+\cdots+a_nP_m(x_n) \\
&=\frac{(m-2)}{2}(\sum_{k=1}^na_kx_k^2-\sum_{k=1}^na_kx_k)+\sum_{k=1}^na_kx_k
\end{align*}
would have an integer solution, i.e., $A(m-2)+B$ is represented by the $m$-gonal form $a_1P_m(x_1)+\cdots+a_nP_m(x_n)$.
On the other hand, we may observe that the diophantine system (\ref{equ intro}) has an integer solution $(x_1,\cdots,x_n) \in \z^n$ if and only if the binary quadratic $\z$-lattice $[a_1+\cdots+a_n,B,2A+B]$ is represented by the diagonal quadratic $\z$-lattice $\left<a_1,a_2,\cdots,a_n\right>$ as follows
$$\begin{pmatrix}a_1+\cdots+a_n & B \\ B & 2A+B\end{pmatrix}=
\begin{pmatrix}1 & 1 & \cdots & 1 \\ x_1 & x_2 & \cdots & x_n\end{pmatrix}
\begin{pmatrix}a_1 & 0 & \cdots & 0 \\ 0 & a_2 & \cdots & 0 \\ \vdots & \vdots & \ddots & \vdots \\ 0 & 0 & \cdots & a_n \end{pmatrix}
\begin{pmatrix}1 & x_1 \\ 1 & x_2 \\ \vdots & \vdots \\1 & x_n \end{pmatrix}.$$
Throughout this paper, we use the above arguments.

\vskip 0.3cm
Before we move on, we organize some languages and notations which are adopted in this paper.
A {\it $\z$ (or $\z_p$)-lattice} is a $\z$ (or $\z_p$) module $L$ equipped with a quadratic form $Q$ on $L$ with $Q(L) \subseteq \z$ (or $\z_p$).
We often identify $L=\z x_1+\cdots+\z x_n$ with its {\it Gram matrix $\begin{pmatrix}(B(x_i,x_j)) \end{pmatrix}_{n\times n}$} where $B(x,y)=\frac{1}{2}(Q(x+y)-Q(x)-Q(y))$ is the symmetric bilinear form $B$ corresponding to $Q$.
When the Gram matrix $\begin{pmatrix}(B(x_i,x_j)) \end{pmatrix}_{n\times n}$ is diagonal, i.e., $B(x_i,x_j)=0$ for any $i\not=j$, we simply write $\begin{pmatrix}(B(x_i,x_j)) \end{pmatrix}_{n\times n}$ as $\left<Q(x_1),\cdots, Q(x_n)\right>$.
When $n=2$, we use the notation $[Q(x_1),B(x_1,x_2),Q(x_2)]$ instead of $\begin{pmatrix}(B(x_i,x_j)) \end{pmatrix}_{2\times 2}$.
On the other words, $[Q(\mathbf x_1),B(\mathbf x_1,\mathbf x_2),Q(\mathbf x_2)]$ identify with the binary quadratic $\z$-lattice $$Q(\mathbf x_1)X^2+2B(\mathbf x_1,\mathbf x_2)XY+Q(\mathbf x_2)Y^2.$$
In order to avoid confusing of the readers, I would like to note that someone may use the different notation $[Q(\mathbf x_1),2B(\mathbf x_1,\mathbf x_2),Q(\mathbf x_2)]$ for $Q(\mathbf x_1)X^2+2B(\mathbf x_1,\mathbf x_2)XY+Q(\mathbf x_2)Y^2$.
For simple notation, we write $a_1P_m(x_1)+\cdots+a_nP_m(x_n)$ as $\left<a_1,\cdots,a_n\right>_m$.
Any unexplained notation and terminology can be found in \cite{O1} and \cite{O}.
\vskip 0.3cm

\begin{section}*{Acknowledgements}
The author should like to express gratitude to Ben Kane for meticulous reading and many valuable comments which corrected wrong parts and typos and lead a lot of improvement. 
\end{section}

\vskip 0.3cm
\section{Lemmas}

\begin{prop} \label{l1}
Let an $m$-gonal form $F_m(\mathbf x)=a_1P_m(x_1)+\cdots+a_nP_m(x_n)$ with $a_1 \le \cdots \le a_n$ represents every positive integer up to $m-4$.
If an $m$-gonal form $G_m(\mathbf y)$ represents the consecutive $a_1+\cdots+a_i+1$ integers $N,\cdots, N+(a_1+\cdots+a_i)$, then the $m$-gonal form 
$$G_m(\mathbf y)+a_{i+1}P_m(x_{i+1})+\cdots+aP_n(x_n)$$ represents the consecutive $m-3$ integers $N,\cdots, N+(m-4)$.
\end{prop}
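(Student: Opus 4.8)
The plan is to reduce the statement to an elementary fact about subset sums of $\{a_1,\dots,a_n\}$. The key preliminary observation is that, apart from $P_m(0)=0$ and $P_m(1)=1$, every generalized $m$-gonal number is at least $P_m(-1)=m-3$; indeed $P_m(x)\ge 0$ for all $x\in\z$, and a direct check gives $P_m(x)\ge m$ whenever $|x|\ge 2$. Hence, since $m-3>m-4$, any representation $k=a_1P_m(x_1)+\cdots+a_nP_m(x_n)$ of an integer $k$ with $0\le k\le m-4$ is forced to have every $x_j\in\{0,1\}$, so that $k=\sum_{j\in S}a_j$ with $S=\{j:x_j=1\}$. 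Applying the hypothesis that $F_m$ represents $1,2,\dots,m-4$ (and taking $S=\emptyset$ for $k=0$), I conclude that \emph{every} integer $k$ with $0\le k\le m-4$ is a subset sum of $\{a_1,\dots,a_n\}$.

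Now I would fix $k$ with $0\le k\le m-4$ and pick $S\subseteq\{1,\dots,n\}$ with $\sum_{j\in S}a_j=k$. Split $S=S'\cup S''$ where $S'=S\cap\{1,\dots,i\}$ and $S''=S\cap\{i+1,\dots,n\}$, and set $t=\sum_{j\in S'}a_j$, so that $0\le t\le a_1+\cdots+a_i$. By hypothesis $G_m$ represents $N+t$; fix $\mathbf y_0$ with $G_m(\mathbf y_0)=N+t$. Assigning $x_j=1$ for $j\in S''$ and $x_j=0$ for $j\in\{i+1,\dots,n\}\setminus S''$ gives
\[
G_m(\mathbf y_0)+a_{i+1}P_m(x_{i+1})+\cdots+a_nP_m(x_n)=(N+t)+\sum_{j\in S''}a_j=N+t+(k-t)=N+k .
\]
Letting $k$ run through $0,1,\dots,m-4$ shows that $G_m(\mathbf y)+a_{i+1}P_m(x_{i+1})+\cdots+a_nP_m(x_n)$ represents each of the $m-3$ consecutive integers $N,N+1,\dots,N+(m-4)$, as claimed.

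There is no deep obstacle in this argument; it is essentially a bookkeeping lemma. The points that require (only minor) care are: confirming that no generalized $m$-gonal number lies strictly between $1$ and $m-3$ — which holds for all $m\ge 5$, hence comfortably throughout the range of interest — so that representations of integers $\le m-4$ are forced to be $0/1$-valued; and checking that the partial sum $t$ lands in $\{0,1,\dots,a_1+\cdots+a_i\}$, the range on which $G_m$ is assumed to represent every shifted value $N+t$. I would also note that the hypothesis $a_1\le\cdots\le a_n$ plays no essential role here — all that is used is the covering of $\{0,1,\dots,m-4\}$ by subset sums of $\{a_1,\dots,a_n\}$ — though keeping the coefficients sorted is convenient for the way this proposition is to be applied, namely peeling the tail $a_{i+1},\dots,a_n$ off a form that has been escalated up to $m-4$.
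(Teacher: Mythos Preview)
Your argument is correct and is essentially the same as the paper's: both choose, for each $0\le r\le m-4$, a representation $\mathbf x(r)\in\{0,1\}^n$ with $F_m(\mathbf x(r))=r$, split off the first $i$ coordinates to land in the range $G_m$ is assumed to cover, and add back the tail. Your write-up is in fact more careful, since you justify why the representation must have all $x_j\in\{0,1\}$ (the paper simply asserts it).
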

\begin{proof}
From assumption, for $0\le r \le m-4$, we may take $\mathbf x(r) \in \{0,1\}^n$
for which $$F_m(\mathbf x(r))=r.$$
And then for $\mathbf y(r)$ with $$G_m(\mathbf y(r))=N+a_1P_m(x_1(r))+\cdots +a_iP_m(x_i(r)),$$
we have that 
$$G_m(\mathbf y(r))+a_{i+1}P_m(x_{i+1}(r))+\cdots+aP_n(x_n(r))=N+r.$$
\end{proof}

\vskip 0.3em

\begin{coro} \label{rmk}
For an $m$-gonal form $$F_m(\mathbf x)=a_1P_m(x_1)+\cdots+a_nP_m(x_n)$$ with $a_1 \le \cdots \le a_n$ which represents every positive integer up to $m-4$ and an unary $m$-gonal form $$a_{n+1}P_m(x_{n+1}),$$ if 
$$a_1P_m(x_1)+\cdots +a_iP_m(x_i)+a_{n+1}P_m(x_{n+1})$$
(for some $i \le n$) represents every positive integer of the form of 
$$A(m-2)+B, \ A(m-2)+(B+1), \ \cdots, \ A(m-2)+(B+a_1+\cdots +a_i+1)$$
for a fixed $B \in \z$, then the $m$-gonal form 
$$F_m(\mathbf x)+a_{n+1}P_m(x_{n+1})$$ is universal.
\end{coro}
\begin{proof}
By using Proposition \ref{l1} with 
$$G_m(\mathbf y)=a_1P_m(y_1)+\cdots +a_iP_m(y_i)+a_{n+1}P_m(y_{n+1})$$ 
and
$$N=A(m-2)+B+1,$$ 
we obtain that $$F_m(\mathbf x)+a_{n+1}P_m(x_{n+1})$$ represents every positive integer in 
$$[A(m-2)+(B+1),A(m-2)+B+(m-3)],$$ yielding that $F_m(\mathbf x)+a_{n+1}P_m(x_{n+1})$ represents every positive integer in $[A(m-2)+B,A(m-2)+B+(m-3)]$.
Since $\{B, B+1, \cdots, B+(m-3)\}$ is a complete system of residues modulo $m-2$, it completes the proof.
\end{proof}

\vskip 0.3em

\begin{lem} \label{ml}
\noindent $(1)$ The $m$-gonal form $$\left<1,1,1,1\right>_m$$ represents every positive integer $A(m-2)+B$ such that 
$$\text{$B \eq 1 \pmod 2$ and $8A+4B-B^2 \ge 0$.}$$

\noindent $(2)$ The $m$-gonal form 
$$\left<1,1,1,2\right>_m$$ 
represents every positive integer $A(m-2)+B$ such that 
$$\text{$(A,B) \not\eq (0,0) \pmod 5$ and $10A+5B-B^2 \ge 0$.}$$

\noindent $(3)$ The $m$-gonal form 
$$\left<1,1,2,3\right>_m$$ 
represents every positive integer $A(m-2)+B$ such that  
$$\text{$(A,B) \not\eq (0,0) \pmod 7$ and $14A+7B-B^2 \ge 0$.}$$

\noindent $(4)$ The $m$-gonal form 
$$\left<1,1,2,4\right>_m$$ 
represents every positive integer $A(m-2)+B$ with $16A+8B-B^2 \ge 0$ otherwise 
$$A\eq 1 \pmod2\text{ and }B \eq 4 \pmod8$$
or
$$A\eq 0 \pmod2\text{ and }B \eq 0 \pmod8.$$

\noindent $(5)$ The $m$-gonal form 
$$\left<1,2,2,3\right>_m$$ 
represents every positive integer $A(m-2)+B$ such that  
$$\text{$B \not\eq 0 \pmod 4$ and $16A+8B-B^2 \ge 0$.}$$
\end{lem}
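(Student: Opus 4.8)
The plan is to convert representability by an $m$-gonal form into the solvability of the system (\ref{equ intro}), then into the representation of a single integer by an inhomogeneous ternary form, and finally to settle that by a local--global argument, with part $(1)$ coming out purely classically.

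\textbf{Reduction.} By the identity recalled in the introduction, $\langle a_1,\dots,a_4\rangle_m$ represents $A(m-2)+B$ exactly when
\begin{equation*}
\sum_{i=1}^{4}a_ix_i^2=2A+B,\qquad \sum_{i=1}^{4}a_ix_i=B
\end{equation*}
is solvable in $\z^4$. Substituting $x_i=(z_i+1)/2$, so that $z_i=2x_i-1$ runs through the odd integers, turns this into
\begin{equation*}
\sum_{i=1}^{4}a_iz_i^2=8A+\sum_{i=1}^{4}a_i,\qquad \sum_{i=1}^{4}a_iz_i=2B-\sum_{i=1}^{4}a_i,\qquad z_i\ \text{odd}.
\end{equation*}
Eliminating one variable through the linear equation presents the quadratic equation as the representation of an explicit integer $n=n(A,B)$ (linear in $A$, quadratic in $B$) by the inhomogeneous rank-$3$ form $\sum a_iz_i^2$ restricted to the coset $\{\mathbf z:\sum a_iz_i=2B-\sum a_i,\ z_i\ \text{odd}\}$. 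In every case the positivity hypothesis ($8A+4B-B^2\ge0$, $10A+5B-B^2\ge0$, $14A+7B-B^2\ge0$, or $16A+8B-B^2\ge0$) is precisely the Cauchy--Schwarz bound $\bigl(\sum a_iz_i\bigr)^2\le\bigl(\sum a_i\bigr)\bigl(\sum a_iz_i^2\bigr)$, i.e.\ the condition that the system have a real solution; so I retain it as a standing hypothesis and only the finite places are at issue.

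\textbf{Part $(1)$.} Here $\sum a_i=4$, so I must solve $z_1^2+z_2^2+z_3^2+z_4^2=8A+4$, $z_1+z_2+z_3+z_4=2B-4$ with all $z_i$ odd. Put $z_1+z_2=(B-2)+g$ and $z_3+z_4=(B-2)-g$ with $g$ odd (forced, since $B$ is odd), and write $z_1,z_2$ and $z_3,z_4$ symmetrically about their means; the quadratic equation becomes $g^2+(t+u)^2+(t-u)^2=8A+4B-B^2$. Since $B$ is odd, $D:=8A+4B-B^2\equiv3\pmod 8$, so $D$ is a positive integer not of the form $4^a(8b+7)$; the Gauss--Legendre three-square theorem gives $D=g^2+a^2+b^2$, and a residue count modulo $8$ forces $g,a,b$ all odd. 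Setting $t=(a+b)/2$, $u=(a-b)/2$ and choosing the sign of $g$ so that each $z_i$ is odd yields the representation.

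\textbf{Parts $(2)$--$(5)$.} The same reduction applies, and I would finish via the arithmetic of the ternary coset (one could instead push the elementary method of $(1)$ through, with more cases). Whenever $p$ does not divide the discriminant of the coset lattice, that lattice is $\z_p$-unimodular, hence isotropic (as $p$ is then odd) and so contains a hyperbolic plane, and a coset of such a lattice represents all of $\z_p$; thus $\z_p$-solvability of the system is automatic there. This leaves, besides $p=2$ (always special, owing to the parity condition on the $z_i$), only $p=5$ for $\langle1,1,1,2\rangle_m$ and $p=7$ for $\langle1,1,2,3\rangle_m$ — the primes at which the restriction of the form to the relevant hyperplane degenerates into an anisotropic binary piece plus a radical line; note that although $3$ divides the coset discriminant for $\langle1,1,2,3\rangle_m$ and $\langle1,2,2,3\rangle_m$, the $3$-adic Jordan splitting still contains a hyperbolic plane, so $3$ is unobstructed. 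A direct $\z_p$-computation at the bad primes (a mod-$p^2$ analysis along the radical line, at $p=5,7$) should reproduce exactly the stated congruence conditions: $B\equiv1\pmod2$ in $(1)$; the mod-$8$ dichotomies in $(4)$ and $(5)$; $(A,B)\nequiv(0,0)\pmod5$ in $(2)$; $(A,B)\nequiv(0,0)\pmod7$ in $(3)$. Finally, for local-to-global: by the effective local--global theorem for positive lattice cosets of rank $\ge3$, all sufficiently large locally represented $n(A,B)$ outside the finitely many spinor-exceptional square classes are represented, so it remains to treat a bounded range of $n(A,B)$ together with those square classes — for these tiny cosets one expects regularity (no spinor exceptions), so this should be unobstructed after a short finite verification.

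\textbf{Main obstacle.} The crux is the $2$-adic analysis in $(4)$ and $(5)$: since $2$ divides the discriminant, the $2$-adic Jordan splitting of $\langle1,1,2,4\rangle$ and $\langle1,2,2,3\rangle$ on the relevant coset is delicate, and matching it to the exact mod-$8$ conditions quoted (presented there as exceptions, reflecting genuine $2$-adic obstructions) is where the real work lies. A secondary difficulty is ruling out spinor-exceptional behaviour in the ternary reduction for those forms whose associated ternary is not one class per genus; beyond that everything is bookkeeping, and the mod-$8$ pattern of $8A+4B-B^2$ used in $(1)$ already shows why the odd-$B$ hypothesis there is the natural one.
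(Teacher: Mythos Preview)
Your argument for part $(1)$ is correct and is genuinely different from the paper's. You reduce to Gauss' three-square theorem via the substitution $z_i=2x_i-1$ and the splitting $z_1+z_2=(B-2)+g$, $z_3+z_4=(B-2)-g$; the parity bookkeeping (which you leave a bit implicit) does go through, since with $g,a,b$ all odd one checks that $\delta_1=(a+b)/2$ and $\delta_2=(a-b)/2$ have opposite parities, as do the two means $\tfrac{(B-2)\pm g}{2}$, so a single sign flip of $g$ aligns them and makes every $z_i$ odd. The paper instead observes that the system is solvable iff the binary lattice $[4,B,2A+B]$ is represented by $\langle1,1,1,1\rangle$, and then uses that $\langle1,1,1,1\rangle$ has class number one together with a short local check. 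Your route is more elementary; theirs generalizes directly.

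For parts $(2)$--$(5)$ there is a real gap. You reduce to a ternary lattice \emph{coset}, and for ternary objects the passage from local to global is governed by the spinor genus: representability can fail on infinitely many integers lying in finitely many spinor-exceptional square classes. You note this and write that ``one expects regularity \dots\ after a short finite verification,'' but you neither identify the spinor characters nor perform the verification; the effective local--global theorem you invoke also comes with an ineffective constant, so ``sufficiently large'' plus ``a bounded range'' is not yet a proof. This is exactly the obstacle the paper sidesteps. Rather than descending to rank three, the paper stays in rank four: the system $\sum a_ix_i=B$, $\sum a_ix_i^2=2A+B$ is solvable iff the \emph{binary} lattice $[\,\sum a_i,\,B,\,2A+B\,]$ is represented by the \emph{quaternary} lattice $\langle a_1,a_2,a_3,a_4\rangle$, once one checks (by exhibiting explicit isometries) that every vector of norm $\sum a_i$ in $\langle a_1,a_2,a_3,a_4\rangle$ can be carried to $(1,1,1,1)$. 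Each of $\langle1,1,1,1\rangle$, $\langle1,1,1,2\rangle$, $\langle1,1,2,3\rangle$, $\langle1,1,2,4\rangle$, $\langle1,2,2,3\rangle$ has class number one, so local representability of the binary lattice suffices, and the local analysis at the finitely many bad primes (including $p=2$ in cases $(4)$ and $(5)$) produces precisely the stated congruence restrictions. In short, the missing idea in your $(2)$--$(5)$ is to keep the problem quaternary and exploit class number one, which eliminates the spinor-genus issue you correctly flagged as the main obstacle.
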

\begin{proof}
\noindent $(1)$ It would be enough to show that there is an integer solution $(x_1,x_2,x_3,x_4) \in \z^4$ for 
\begin{equation}\label{1111}\begin{cases}x_1+x_2+x_3+x_4=B \\ x_1^2+x_2^2+x_3^2+x_4^2=2A+B \end{cases}\end{equation}
where $\text{$B \eq 1 \pmod 2$ and $8A+4B-B^2 \ge 0$.}$
Recall that the diophantine system (\ref{1111}) has an integer solution $(x_1,x_2,x_3,x_4) \in \z^4$ if and only if 
the binary $\z$-lattice 
$[4,B,2A+B]$
is represented by the diagonal quaternary $\z$-lattice $\left<1,1,1,1\right>$
as follows 
$$\begin{pmatrix}4 & B \\ B & 2A+B\end{pmatrix}=
\begin{pmatrix}1 & 1 &1 & 1 \\ x_1 & x_2 & x_3 & x_4\end{pmatrix}
\begin{pmatrix}1 & 0 & 0 & 0 \\ 0 & 1 & 0 & 0 \\ 0 & 0 & 1 & 0 \\ 0 & 0 & 0 & 1 \end{pmatrix}
\begin{pmatrix}1 & x_1 \\ 1 & x_2 \\ 1 & x_3 \\1 & x_4 \end{pmatrix}.$$
By easy numeration, we may see that there are exactly two different ways 
$$4=\begin{pmatrix}1 & 1 &1 & 1  \\ \end{pmatrix}
\begin{pmatrix}1 & 0 & 0 & 0 \\ 0 & 1 & 0 & 0 \\ 0 & 0 & 1 & 0 \\ 0 & 0 & 0 & 1 \end{pmatrix}
\begin{pmatrix} 1  \\ 1  \\ 1  \\ 1  \end{pmatrix}$$
and
$$4=\begin{pmatrix}2 & 0 &0 & 0  \\ \end{pmatrix}
\begin{pmatrix}1 & 0 & 0 & 0 \\ 0 & 1 & 0 & 0 \\ 0 & 0 & 1 & 0 \\ 0 & 0 & 0 & 1 \end{pmatrix}
\begin{pmatrix} 2  \\ 0  \\ 0  \\ 0  \end{pmatrix}$$
to represent $4$ by $\left<1,1,1,1\right>$ up to isometry.
So we have that a binary quadratic $\z$-lattice $[4,B,2A+B]$ is represented by $\left<1,1,1,1\right>$ if and only if there is an integer solution $(x_1,x_2,x_3,x_4) \in \z^4$ for either 
$$\begin{pmatrix}4 & B \\ B & 2A+B\end{pmatrix}=
\begin{pmatrix}1 & 1 &1 & 1 \\ x_1 & x_2 & x_3 & x_4\end{pmatrix}
\begin{pmatrix}1 & 0 & 0 & 0 \\ 0 & 1 & 0 & 0 \\ 0 & 0 & 1 & 0 \\ 0 & 0 & 0 & 1 \end{pmatrix}
\begin{pmatrix}1 & x_1 \\ 1 & x_2 \\ 1 & x_3 \\1 & x_4 \end{pmatrix}$$
or
$$\begin{pmatrix}4 & B \\ B & 2A+B\end{pmatrix}=
\begin{pmatrix}2 & 0 &0 & 0 \\ x_1 & x_2 & x_3 & x_4\end{pmatrix}
\begin{pmatrix}1 & 0 & 0 & 0 \\ 0 & 1 & 0 & 0 \\ 0 & 0 & 1 & 0 \\ 0 & 0 & 0 & 1 \end{pmatrix}
\begin{pmatrix}2 & x_1 \\ 0 & x_2 \\ 0 & x_3 \\0 & x_4 \end{pmatrix},$$
i.e., there is an integer solution $(x_1,x_2,x_3,x_4) \in \z^4$ for either (\ref{1111})
or 
\begin{equation} \label{1111'}
\begin{cases}2x_1=B \\ x_1^2+x_2^2+x_3^2+x_4^2=2A+B.
\end{cases}
\end{equation}
From our assumption $B \eq 1 \pmod 2$, 
there would not be an integer solution $(x_1,x_2,x_3,x_4)$ for (\ref{1111'}).
So we have that the diophantine system (\ref{1111}) has an integer solution $(x_1,x_2,x_3,x_4) \in \z^4$ if and only if the binary $\z$-lattice 
$[4,B,2A+B]$
with $B\eq 1 \pmod2$ is represented by the diagonal quaternary $\z$-lattice $\left<1,1,1,1\right>$.

On the other hand, since the class number of $\left<1,1,1,1\right>$ is one, 
the local representability of 
$[4,B,2A+B]$
by $\left<1,1,1,1\right>$ would be equivalent with
the (global) representability of 
$[4,B,2A+B]$
by $\left<1,1,1,1\right>$.

When $p$ is an odd prime, since $\left<1,1,1,1\right> \cong \left<1,-1\right> \perp \left<1,-1\right>$ over $\z_p$ and $\left<1,-1\right> \otimes \z_p$ is  $\z_p$-universal,
we obtain that 
the binary $\z_p$-lattice $[4,B,2A+B] \otimes \z_p (\cong \left<4 , 4(8A+4B-B^2)\right> \otimes \z_p )$
is represented by $\left<1,1,1,1\right>  \otimes \z_p $.
When $p=2$, since $[4,B,2A+B] \otimes \z_2 $ (with $B \eq 1 \pmod 2$) is a binary odd unimodular $\z_2$-lattice with discriminant $8A+4B-B^2 \equiv 3 \pmod{8}$, $[4,B,2A+B] \otimes \z_2$ would be equivalent with $\left<1,3\right>\otimes \z_2$.
Since $\left<1,3\right>\otimes \z_2$ is clearly represented by $\left<1,1,1,1\right>\otimes \z_2$, we have that the positive definite binary $\z$-lattice $[4,B,2A+B]$ with $8A+4B-B^2 \ge 0$ is locally represented by $\left<1,1,1,1\right>$, yielding that $[4,B,2A+B]$ is indeed (globally) represented by $\left<1,1,1,1 \right>$.
This completes the proof.

(2) It would be enough to show that there is an integer solution $(x_1,x_2,x_3,x_4) \in \z^4$ for 
\begin{equation}\label{1112}\begin{cases}x_1+x_2+x_3+2x_4=B \\ x_1^2+x_2^2+x_3^2+2x_4^2=2A+B \end{cases}\end{equation}
where $\text{$(A,B) \not\eq (0,0) \pmod 5$ and $10A+5B-B^2 \ge 0$.}$
By easy numeration, we may see that there are exactly two different ways to represent $5$ by $\left<1,1,1,2\right>$ 
$$5=1\cdot 1^2+1\cdot 1^2+1\cdot 1^2+2\cdot 1^2$$
and
$$5=1\cdot 2^2+1\cdot 1^2+1\cdot 0^2+2\cdot 0^2$$
up to isometry.
Through similar arguments with the above (1), we may obtain that the binary quadratic $\z$-lattice $[5,B,2A+B]$ is represented by the diagonal quaternary $\z$-lattice $\left<1,1,1,2\right>$ if and only if there is an integer solution $(x_1,x_2,x_3,x_4) \in \z^4$ for either (\ref{1112}) or 
\begin{equation}\label{1112''}\begin{cases}2x_1+x_2=B \\ x_1^2+x_2^2+x_3^2+2x_4^2=2A+B.\end{cases}\end{equation}
For $(x_1,x_2,x_3,x_4) \in \z^4$ satisfying (\ref{1112''}), 
$$\left(\frac{x_1+x_3+2x_4}{2},x_2 ,\frac{x_1+x_3-2x_4}{2},\frac{x_1-x_3}{2} \right) \in \z^4$$
would be an integer solution for (\ref{1112}) since $2x_1+x_2 \eq x_1^2+x_2^2+x_3^2+2x_4^2 \pmod 2$.
So we conclude that the (\ref{1112}) has an integer solution $(x_1,x_2,x_3,x_4) \in \z^4$ if and only if the binary quadratic $\z$-lattice $[5,B,2A+B]$ is represented by the diagonal quaternary quadratic $\z$-lattice $\left<1,1,1,2\right>$

On the other hand, since the class number of $\left<1,1,1,2\right>$ is one, the local representability of $[5,B,2A+B]$ by $\left<1,1,1,2\right>$ would be equivalent with
the (global) representability of $[5,B,2A+B]$ by $\left<1,1,1,2\right>$.
When $p \not =5$ is an odd prime, since $\left<1,1,1,2\right> \cong \left<1,-2\right> \perp \left<1,-1\right>$ over $\z_p$ and $\left<1,-1\right> \otimes \z_p$ is $\z_p$-universal,
we obtain that the binary quadratic $\z_p$-lattice $[5,B,2A+B] \otimes \z_p (\cong \left<5 , 5(10A+5B-B^2)\right> \otimes \z_p )$ is represented by $\left<1,1,1,2\right>  \otimes \z_p $ since $5$ and $5(10A+5B-B^2)$ are represented by $\left<1,-2\right>\otimes \z_p$ and $\left<1,-1\right>\otimes \z_p$, respectively.
When $p=5$, $[5,B,2A+B] \otimes \z_5$ with $(A, B) \not \eq (0,0) \pmod 5$ is equivalent with the binary quadratic $\z_5$-lattice
$\left<2A+B(\in \z_5^{\times}), 2A+B(10A+5B-B^2)\right>\otimes \z_5$ (resp. $\left<1,1\right>\otimes \z_5$) when $B \equiv 0 \pmod 5$ (resp. when $B \not \equiv 0 \pmod5$).
So we may obtain that $[5,B,2A+B] \otimes \z_5$ with $(A, B) \not \eq (0,0) \pmod 5$ is represented by $\left<1,1,1,2\right> \otimes \z_5$.
When $p=2$, $\left<1,1,1,2\right>\otimes \z_2$ is equivalent with $\left(\left<5\right>\perp [2,1,2] \perp \left<2\cdot 7\right>\right)\otimes \z_2$.
So $[5,B,2A+B] \otimes \z_2 (\cong \left<5 , 5(10A+5B-B^2)\right> \otimes \z_2 )$ is represented by $\left<1,1,1,2\right>  \otimes \z_2 $ because 
$5(10A+5B-B^2)$ is even and
$\left( [2,1,2] \perp \left<2\cdot 7\right>\right)\otimes \z_2$ 
is even universal.
Over all, we obtain that the positive definite binary $\z$-lattice $[5,B,2A+B]$ with $10A+5B-B^2 \ge 0$ is locally represented by $\left<1,1,1,2\right>$, yielding that $[5,B,2A+B]$ is indeed (globally) represented by $\left<1,1,1,2 \right>$.
This completes the proof.

(3) It would be enough to show that there is an integer solution $(x_1,x_2,x_3,x_4) \in \z^4$ for 
\begin{equation}\label{1123}\begin{cases}x_1+x_2+2x_3+3x_4=B \\ x_1^2+x_2^2+2x_3^2+3x_4^2=2A+B \end{cases}\end{equation}
where $\text{$(A,B) \not\eq (0,0) \pmod 7$ and $14A+7B-B^2 \ge 0$.}$
Since there are exactly three different ways to represent $7$ by $\left<1,1,2,3\right>$
$$7=1\cdot 1^2+1\cdot 1^2+2\cdot 1^2+3\cdot 1^2,$$
$$7=1\cdot 2^2+1\cdot 1^2+2\cdot 1^2+3\cdot 0^2,$$
and
$$7=1\cdot 2^2+1\cdot 0^2+2\cdot0^2+3\cdot 1^2$$
up to isomery, through the similar arguments with the above (1), we may obtain that 
the binary quadratic $\z$-lattice $[7,B,2A+B]$ is represented by the diagonal quaternary $\z$-lattice $\left<1,1,2,3\right>$ if and only if
there is $(x_1,x_2,x_3,x_4) \in \z^4$ satisfying either (\ref{1123}) or 
\begin{equation}\label{1123''}\begin{cases}2x_1+x_2+2x_3=B \\ x_1^2+x_2^2+2x_3^2+3x_4^2=2A+B\end{cases}\end{equation}
or
\begin{equation}\label{1123'''}\begin{cases}2x_1+3x_4=B \\ x_1^2+x_2^2+2x_3^2+3x_4^2=2A+B.\end{cases}\end{equation}
For $(x_1,x_2,x_3,x_4) \in \z^4$ satisfying (\ref{1123''}),
$$\left(\frac{x_1+3x_4}{2},x_2 ,x_3,\frac{x_1-x_4}{2} \right) \in \z^4 $$
would be an integer solution for (\ref{1123}) since $x_1+x_4 \eq 0 \pmod2$.
For $(x_1,x_2,x_3,x_4) \in \z^4$ satisfying (\ref{1123'''}),
$$\left(\frac{x_1+x_2+2x_3}{2},\frac{x_1+x_2-2x_3}{2} ,\frac{x_1-x_2}{2},x_4 \right) \in \z^4 $$
would be an integer solution for (\ref{1123}) since $x_1+x_2 \eq 0 \pmod2$.
So we conclude that the (\ref{1123}) has an integer solution $(x_1,x_2,x_3,x_4) \in \z^4$ if and only if the binary quadratic $\z$-lattice $[7,B,2A+B]$ is represented by the diagonal quaternary quadratic $\z$-lattice $\left<1,1,2,3\right>$.

On the other hand, since the class number of $\left<1,1,2,3\right>$ is one, the local representability of $[7,B,2A+B]$ by $\left<1,1,2,3\right>$ would be equivalent with the (global) representability of $[7,B,2A+B]$ by $\left<1,1,2,3\right>$.
When $p \not =7$ is an odd prime, since $\left<1,1,2,3\right> \cong \left<1,-6\right> \perp \left<1,-1\right>$ over $\z_p$ and $\left<1,-1\right> \otimes \z_p$ is $\z_p$-universal,
we obtain that $[7,B,2A+B] \otimes \z_p (\cong \left<7 , 7(14A+7B-B^2)\right> \otimes \z_p )$ is represented by $\left<1,1,2,3\right>  \otimes \z_p $ since $7$ and $7(14A+7B-B^2)$ are represented by $\left<1,-6\right>\otimes \z_p$ and $\left<1,-1\right>\otimes \z_p$, respectively.
When $p=7$, $[7,B,2A+B] \otimes \z_7$ with $(A, B) \not \eq (0,0) \pmod 7$ is equivalent with 
$\left<2A+B(\in \z_7^{\times}), 2A+B(14A+7B-B^2)\right>\otimes \z_7$ (resp. $\left<1,6\right>\otimes \z_7$) when $B \equiv 0 \pmod 7$ (resp. when $B \not \equiv 0 \pmod 7$).
So we may obtain that $[7,B,2A+B] \otimes \z_7$ is represented by $\left<1,1,2,3\right> \otimes \z_7$.
When $p=2$, we may see that $[7,B,2A+B] \otimes \z_2 (\cong \left<7 , 7(14A+7B-B^2)\right> \otimes \z_2 )$ is represented by $\left<1,1,2,3\right>  \otimes \z_2 \cong ( \left<7\right>\perp [0,1,0] \perp$ $\left<2\cdot 3\right> )\otimes \z_2$ since 
$7(14A+7B-B^2)$ is even and
$  [0,1,0] \otimes \z_2$ 
is even universal.
Over all, we have that the positive definite binary $\z$-lattice $[7,B,2A+B]$ with $14A+7B-B^2 \ge 0$ is locally represented by $\left<1,1,2,3\right>$, yielding that $[7,B,2A+B]$ is indeed (globally) represented by $\left<1,1,2,3 \right>$.
This completes the proof.

(4) It would be enough to show that there is an integer solution $(x_1,x_2,x_3,x_4) \in \z^4$ for 
\begin{equation}\label{1124}\begin{cases}x_1+x_2+2x_3+4x_4=B \\ x_1^2+x_2^2+2x_3^2+4x_4^2=2A+B \end{cases}\end{equation}
for $A$ and $B$ satisfying the assumption.
Since there are exactly four different ways to represent $8$ by $\left<1,1,2,4\right>$
$$8=1\cdot 1^2+1\cdot 1^2+2\cdot 1^2+4\cdot 1^2,$$
$$8=1\cdot 2^2+1\cdot 0^2+2\cdot 0^2+4\cdot 1^2,$$
$$8=1\cdot 2^2+1\cdot 2^2+2\cdot 0^2+4\cdot 0^2,$$
and
$$8=1\cdot 0^2+1\cdot 0^2+2\cdot2^2+3\cdot 0^2$$
up to isomery, through the similar arguments with the above (1), we may obtain that 
the binary quadratic $\z$-lattice $[8,B,2A+B]$ is represented by the diagonal quaternary $\z$-lattice $\left<1,1,2,4\right>$ if and only if
there is $(x_1,x_2,x_3,x_4) \in \z^4$ satisfying either (\ref{1124}) or 
\begin{equation}\label{1124''}\begin{cases}2x_1+4x_4=B \\ x_1^2+x_2^2+2x_3^2+4x_4^2=2A+B,\end{cases}\end{equation}
\begin{equation}\label{1124'''}\begin{cases}2x_1+2x_2=B \\ x_1^2+x_2^2+2x_3^2+4x_4^2=2A+B,\end{cases}\end{equation}
or
\begin{equation}\label{1124''''}\begin{cases}4x_3=B \\ x_1^2+x_2^2+2x_3^2+4x_4^2=2A+B.\end{cases}\end{equation}
When $B \eq 1 \pmod 2$, there is no integer solution for (\ref{1124''}), (\ref{1124'''}), and (\ref{1124''''}).
When $B \eq 2 \pmod 4$, we may not have an integer solution for (\ref{1124'''}) and (\ref{1124''''}).
For an integer solution $(x_1,x_2,x_3,x_4) \in \z^4$ for (\ref{1124''}), $$\left(\frac{x_1+x_2+2x_3}{2},\frac{x_1+x_2-2x_3}{2},\frac{x_1+x_2}{2},x_4\right) \in \z^4$$ would be an integer solution for (\ref{1124}). 
When $B \eq 0 \pmod 4$, for an integer solution $(x_1,x_2,x_3,x_4) \in \z^4$ for (\ref{1124'''}), $$\left(\frac{x_1-x_2+2x_3}{2},\frac{x_1-x_2-2x_3}{2},\frac{x_1+x_2}{2},x_4\right) \in \z^4$$ would be an integer solution for (\ref{1124''''}), so we may assume that there is an integer solution $(x_1,x_2,x_3,x_4) \in \z^4$ satisfying  (\ref{1124}) or (\ref{1124''''}).
For an integer solution $(x_1,x_2,x_3,x_4) \in \z^4$ for (\ref{1124''''}) with 
$$A \eq 0 \pmod 2, B \eq 4 \pmod8$$
or
$$A \eq 1 \pmod 2, B \eq 0 \pmod8,$$
$x_1\eq x_2 \eq 1 \pmod 2$ holds.
By changing $x_1$ to $-x_1$ if it is necessary, we may assume that $-x_1+x_2+2x_3 \eq 0 \pmod 4$.
And then for an integer solution $(x_1,x_2,x_3,x_4) \in \z^4$ for (\ref{1124''''}) with $-x_1+x_2+2x_3 \eq 0 \pmod 4$, $$\left(\frac{-x_1-3x_2+2x_3+4x_4}{4},\frac{3x_1+x_2+2x_3+4x_4}{4}, \frac{x_1-x_2+2x_3-4x_4}{4},\frac{-x_1+x_2+2x_3}{4} \right) \in \z^4$$ would be an integer soultion for (\ref{1124}).
So we may conclude that the (\ref{1124}) has an integer solution $(x_1,x_2,x_3,x_4) \in \z^4$ if and only if the binary quadratic $\z$-lattice $[8,B,2A+B]$ is represented by the diagonal quaternary quadratic $\z$-lattice $\left<1,1,2,4\right>$.

On the other hand, since the class number of $\left<1,1,2,4\right>$ is one, the local representability of $[8,B,2A+B]$ by $\left<1,1,2,4\right>$ would be equivalent with
the (global) representability of  $[8,B,2A+B]$ by $\left<1,1,2,4\right>$.
When $p $ is an odd prime, since $\left<1,1,2,4\right> \cong \left<1,-2\right> \perp \left<1,-1\right>$ over $\z_p$ and $\left<1,-1\right> \otimes \z_p$ is $\z_p$-universal,
we obtain that $[8,B,2A+B] \otimes \z_p (\cong \left<8 , 8(16A+8B-B^2)\right> \otimes \z_p )$ is represented by $\left<1,1,2,4\right>  \otimes \z_p $ since $8$ and $8(16A+8B-B^2)$ are represented by $\left<1,-2\right>\otimes \z_p$ and $\left<1,-1\right>\otimes \z_p$, respectively.
When $p=2$, $[8,B,2A+B] \otimes \z_2$ is equivalent with 
$$\text{$\left<1,7\right>$ (when $B \eq 1 \pmod 2$),}$$ 
$$\text{$\left<2,6\right>, \left<2,14\right>,$ or $[0,2,0]$ (when $B \eq 2 \pmod 4$),}$$
$$\text{$\left<2A+B, \frac{16A+8B-B^2}{2A+B} \right>$ (when $B \eq 0 \pmod 4$ and $2A+B \not\eq 0 \pmod 8$)},$$
$$\text{$[0,4,0]$ or $[8,4,8]$ (when $B \eq 0 \pmod 4$ and $2A+B \eq 0 \pmod 8$)}.$$
One may check that the above binary $\z_2$-lattices are represented by the diagonal quaternary $\z_2$-lattice $\left<1,1,2,4\right>\otimes \z_2$ case by case.
Consequently, we may obtain that the positive definite binary $\z$-lattice $[8,B,2A+B]$ with $16A+8B-B^2 \ge 0$ is locally represented by $\left<1,1,2,4\right>$, yielding that $[8,B,2A+B]$ is indeed (globally) represented by $\left<1,1,2,4 \right>$.
This completes the proof.

(5) It would be enough to show that there is an integer solution $(x_1,x_2,x_3,x_4) \in \z^4$ for 
\begin{equation}\label{1223}\begin{cases}x_1+2x_2+2x_3+3x_4=B \\ x_1^2+2x_2^2+2x_3^2+3x_4^2=2A+B \end{cases}\end{equation}
where $\text{$B \not\eq 0 \pmod 4$ and $16A+8B-B^2 \ge 0$.}$
By easy numeration, we may see that there are exactly two different ways to represent $8$ by $\left<1,2,2,3\right>$ up to isometry as follows,
$$8=1\cdot1^2+2\cdot1^2+ 2\cdot1^2+3\cdot1^2$$
and
$$8=1\cdot2^2+2\cdot1^2+ 2\cdot1^2+3\cdot0^2.$$
Through the similar arguments with the above, we may see that
the binary quadratic $\z$-lattice $[8,B,2A+B]$ is represented by the diagonal quaternary quadratic $\z$-lattice $\left<1,2,2,3\right>$
if and only if there is an integer solution $(x_1,x_2,x_3,x_4) \in \z^4$ satisfying either (\ref{1223}) or 
\begin{equation}\label{1223''}\begin{cases}2x_1+2x_2+2x_3=B \\ x_1^2+2x_2^2+2x_3^2+3x_4^2=2A+B.\end{cases}\end{equation}
For an integer soultion $(x_1,x_2,x_3,x_4) \in \z^4$ for (\ref{1223''}), $$\left(\frac{x_1+3x_4}{2},x_2,x_3,\frac{x_1-x_4}{2}\right) \in \z^4$$ would be an integer solution for (\ref{1223}).
So we conclude that the diophantine system (\ref{1223}) has an integer solution $(x_1,x_2,x_3,x_4) \in \z^4$ if and only if the binary quadratic $\z$-lattice $[8,B,2A+B]$ is represented by the diagonal quaternary quadratic $\z$-lattice $\left<1,1,2,4\right>$.

On the other hand, since the class number of $\left<1,2,2,3\right>$ is one, the local representability of $[8,B,2A+B]$ by $\left<1,2,2,3\right>$ would be equivalent with
the (global) representability of  $[8,B,2A+B]$ by $\left<1,2,2,3\right>$.
When $p$ is an odd prime, one may easily see that $[8,B,2A+B]\otimes \z_p \cong \left<8,8(16A+8B-B^2)\right>\otimes \z_p$ is represented by $\left<1,2,2,3\right> \otimes \z_p$.
When $p=2$, $[8,B,2A+B] \otimes \z_2$ is equivalent with 
$$\text{$\left<1,7\right>$ when $B \eq 1 \pmod 2$,}$$ 
or
$$\text{$\left<2,6\right>, \left<2,14\right>,$ or $[0,2,0]$ when $B \eq 2 \pmod 4$}.$$
One may check that the above binary $\z_2$-lattices are represented by the diagonal quaternary $\z_2$-lattice $\left<1,2,2,3\right>\otimes \z_2$ case by case.
Consequently, we have that the positive definite binary $\z$-lattice $[8,B,2A+B]$ with $16A+8B-B^2 \ge 0$ is locally represented by $\left<1,2,2,3\right>$, yielding that $[8,B,2A+B]$ is indeed (globally) represented by $\left<1,2,2,3 \right>$.
This completes the proof.
\end{proof}

\vskip 0.5em

\section{Main Theorem}

Suppose that an $m$-gonal form $$F_m(\mathbf x)=a_1P_m(x_1)+\cdots+a_nP_m(x_n)$$ represents every positive integer up to $m-4$.
Without loss of generality, we assume that $a_1\le \cdots \le a_n$.
Since the smallest $m$-gonal number is $m-3$ except $0$ and $1$, we conclude that 
\begin{equation} \label{b}
\begin{cases}
a_1=1 & \\
a_{i+1} \le a_1+a_2+\cdots+a_i+1 & \text{if } a_1+a_2+\cdots+a_i<m-4\\
a_1+\cdots+a_n \ge m-4.
\end{cases}
\end{equation}
Following (\ref{b}), for $m \ge 12$, we have that $n \ge 4$ and
\begin{equation} \label{3}
(a_1,a_2,a_3) \in \{(1,1,1),(1,1,2),(1,1,3),(1,2,2),(1,2,3),(1,2,4)\}.
\end{equation}
In Lemma \ref{mml}, for each $(a_1,a_2,a_3)$ in (\ref{3}), we assign $a\in \N$ for which for any $m$-gonal form $F_m(\mathbf x)$ which has its first three coefficients as $(a_1,a_2,a_3)$ and represents every positive integer up to $m-4$, $$F_m(\mathbf x)+aP_m(x_{n+1})$$ is universal to yield Theorem \ref{main thm} (1) for $m \ge 12$.

\vskip 1em

\begin{lem} \label{mml}
Let an $m$-gonal form $F_m(\mathbf x)=a_1P_m(x_1)+\cdots +a_nP_m(x_n)$ with $a_1 \le \cdots \le a_n$ represents every positive integer up to $m-4$.

\noindent $(1)$ When $(a_1,a_2,a_3)=(1,1,1)$, $$F_m(\mathbf x)+2P_m(x_{n+1})$$  with $m \ge 8$ is universal.

\noindent $(2)$ When $(a_1,a_2,a_3)=(1,1,2)$, $$F_m(\mathbf x)+3P_m(x_{n+1})$$ is universal.

\noindent $(3)$ When $(a_1,a_2,a_3)=(1,1,3)$, $$F_m(\mathbf x)+2P_m(x_{n+1})$$ with $m \ge 10$ is universal.

\noindent $(4)$ When $(a_1,a_2,a_3)=(1,2,2)$ and $a_4 \not=4$ (resp. $a_4=4$), $$ \text{$F_m(\mathbf x)+3P_m(x_{n+1})$ (resp. $F_m(\mathbf x)+6P_m(x_{n+1})$)}$$ with $m \ge 10$ is universal.


\noindent $(5)$ When $(a_1,a_2,a_3)=(1,2,3)$ and $a_4 \not=7$ (resp. $a_4=7$), $$\text{$F_m(\mathbf x)+P_m(x_{n+1})$ (resp. $F_m(\mathbf x)+2P_m(x_{n+1})$)}$$ with $m \ge 11$ is universal.

\noindent $(6)$ When $(a_1,a_2,a_3)=(1,2,4)$ and $a_4 \not=8$ (resp. $a_4=8$), $$\text{$F_m(\mathbf x)+P_m(x_{n+1})$ (resp. $F_m(\mathbf x)+2P_m(x_{n+1})$)}$$ with $m \ge 12$ is universal.
\end{lem}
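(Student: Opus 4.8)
The plan is to handle each of the six cases $(a_1,a_2,a_3)$ separately, in every case reducing universality to a short finite checklist via Remark~\ref{rmk}: we must show that the auxiliary form $a_1P_m(x_1)+\cdots+a_iP_m(x_i)+a_{n+1}P_m(x_{n+1})$ represents all integers in a run of $a_1+\cdots+a_i+2$ consecutive residues $A(m-2)+B,\dots,A(m-2)+(B+a_1+\cdots+a_i+1)$ for a single well-chosen $B$. The point is that each such auxiliary form is (after completing with the closing piece) one of the five quaternary shapes $\langle 1,1,1,1\rangle_m$, $\langle 1,1,1,2\rangle_m$, $\langle 1,1,2,3\rangle_m$, $\langle 1,1,2,4\rangle_m$, $\langle 1,2,2,3\rangle_m$ whose representation behavior is completely pinned down by Lemma~\ref{ml}. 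For instance in case (1), $i=3$ and the closing coefficient $2$ give $\langle 1,1,1,2\rangle_m$; in case (2), $i=3$ and closing $3$ give $\langle 1,1,2,3\rangle_m$; in case (3), $i=2$ and closing $2$ give $\langle 1,1,2\rangle_m$ completed — wait, one needs $i$ large enough that $a_1+\cdots+a_i$ plus the closing coefficient matches a tabulated quaternary; so in (3) one takes $i=3$ with $(1,1,3)$ but uses only $(1,1,2,\dots)$ after reshuffling, and so on. The bookkeeping is to choose, for each case, the right truncation index $i$ and verify that $\{a_1,\dots,a_i,a_{n+1}\}$ is exactly one of the five lists.

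The core computation in each case is: given the congruence/positivity description of the represented $A(m-2)+B$ from Lemma~\ref{ml}, exhibit a fixed $B=B(m)$ (typically a small explicit residue, possibly depending on $m \bmod$ a small modulus) such that for every $A\ge 0$ and for each of the $a_1+\cdots+a_i+2$ shifted values $B,B+1,\dots,B+(a_1+\cdots+a_i+1)$, the pair $(A,B+j)$ satisfies the stated local condition (avoiding $(0,0)\bmod 5$, or $\bmod 7$, or the $\bmod 8$ obstructions, or $B\not\equiv 0\bmod 4$, etc.) and the positivity $\Delta A + \delta B - B^2\ge 0$. The positivity is the only place where we genuinely use $A$ ranging over a tail: for $A$ large it is automatic, and for the finitely many small $A$ one checks that the corresponding integers $A(m-2)+B+j$ are all $\le m-4$ (using $m\ge 12$, or the larger lower bounds $m\ge 8,10,11$ stated per case) and hence already represented by $F_m$ itself by hypothesis — so those need no separate argument. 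This is exactly why the hypotheses "$m\ge 12$'' (and the relaxed bounds in cases (1),(3),(4),(5)) appear: they guarantee the window $[B, B+a_1+\cdots+a_i+1]$ of residues near $0$ is absorbed by the escalation-up-to-$(m-4)$ hypothesis.

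For the residue-avoidance part one picks $B$ cleverly. In case (2) with modulus $5$: choose $B$ so that none of $B,\dots,B+3$ is $\equiv 0\bmod 5$ together with $A\equiv 0$; since a block of $4$ consecutive residues mod $5$ misses exactly one class, we can force the missed class to be $0$, and then $(A,B+j)\not\equiv(0,0)$ for all $j$ regardless of $A$ — except we must also dodge the case $A\equiv 0$, which is handled because then $B+j\not\equiv 0$. Similar parity or mod-$4$ or mod-$8$ choices work in the other cases; the $\langle 1,1,2,4\rangle_m$ case (used in (4) with $a_4=4$ and in (6) with $a_4=8$) is the fussiest because its obstruction mixes the parity of $A$ with $B\bmod 8$, so $B$ may have to be chosen according to $m\bmod 2$. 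The anticipated main obstacle is precisely this simultaneous juggling in cases (4) and (6): one needs a single $B$ making a whole length-$6$ (case (4), since $a_1+a_2+a_3+a_4=9$ when $a_4=4$, so length $10$... ) — more carefully, the window length is $a_1+\cdots+a_i+2$ with $i$ as small as the tabulated list allows, and one wants $i$ small to keep the window short, which competes against needing $\{a_1,\dots,a_i,a_{n+1}\}$ to be a listed quaternary; reconciling "short window'' with "tabulated shape'' while dodging the $\bmod 8$ obstruction and staying under $m-4$ for small $A$ is the delicate point, and is where the case-by-case split on $a_4$ (e.g. $a_4=4$ vs.\ $a_4\ne 4$, $a_4=7$ vs.\ not, $a_4=8$ vs.\ not) becomes unavoidable.

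Once these finite verifications are assembled, Theorem~\ref{main thm} follows immediately: given any $F_m$ as in its statement, condition~(\ref{b}) forces $(a_1,a_2,a_3)$ into the list~(\ref{3}), and the matching part of Lemma~\ref{mml} supplies the closing coefficient $a_{n+1}$, so $F_m(\mathbf x)+a_{n+1}P_m(x_{n+1})$ is universal.
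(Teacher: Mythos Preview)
Your overall strategy matches the paper's: reduce via Remark~\ref{rmk} to showing that a quaternary $m$-gonal sub-form built from some of the $a_k$ together with the closing coefficient hits a run of $a_1+\cdots+a_i+2$ consecutive residues modulo $m-2$, and read off those representations from the tabulated cases in Lemma~\ref{ml}. That skeleton is right. But two points in your execution are off.

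First, the index $i$ and the role of the $m$-bounds. In case~(1) the quaternary $\left<1,1,1,2\right>_m$ gives only $B\in\{1,2,3,4\}$, a run of length $4$, whereas Remark~\ref{rmk} with $i=3$ needs length $a_1+a_2+a_3+2=5$. The paper therefore passes to $i=4$: the hypothesis $m\ge 8$ forces (via~(\ref{b})) that $a_4\le 4$, and one then uses $x_4\in\{0,1\}$ to stretch the window and handles the residue $B=0$ separately by applying Lemma~\ref{ml}(2) with a \emph{negative} value of $B$ (namely $B=-a_4$, for $A>a_4$). The lower bounds $m\ge 8,10,11,12$ come precisely from (\ref{b}) bounding $a_4$, not from your ``small $A$ absorbed by the $\le m-4$ hypothesis'' mechanism; that explanation is wrong (for $A\ge 1$ every integer $A(m-2)+B$ already exceeds $m-4$, and the sub-form in Remark~\ref{rmk} is not $F_m$, so the escalation hypothesis on $F_m$ does not directly cover it).

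Second, and this is the real gap, you miss the scaling device needed for the exceptional sub-cases. In case~(4) with $a_4=4$ the coefficients are $(1,2,2,4)$ with closing piece $6$; no four elements of $\{1,2,2,4,6\}$ form one of the five tabulated multisets, so your claim that $\left<1,1,2,4\right>_m$ is used here is incorrect. What the paper does is observe $\{2,2,4,6\}=2\cdot\{1,1,2,3\}$, apply Lemma~\ref{ml}(3) to get that $\left<2,2,4,6\right>_m$ represents every $2A(m-2)+2B$ with $B\in\{1,\dots,6\}$, and then use the remaining unary summand $P_m(x_1)$ with $x_1\in\{-1,0,1,\pm 2\}$ (contributing $0,1,m-3,m,2m-2$) to shift into the odd-$A$ and odd-$B$ residues. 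Case~(6) with $a_4=8$ is the same idea via $\{2,2,4,8\}=2\cdot\{1,1,2,4\}$ and Lemma~\ref{ml}(4). Without this scale-by-$2$-and-shift trick your plan has no way through these two sub-cases.
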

\begin{proof}
(1) By Lemma \ref{ml} (2), $$a_1P_m(x_1)+a_2P_m(x_2)+a_3P_m(x_3)+2P_m(x_{n+1})$$ represents every positive integer of the form of 
$$A(m-2)+1, \ A(m-2)+2, \ A(m-2)+3, \ A(m-2)+4.$$
For $m \ge8$, since $1 \le a_4 \le 4$, we may see that $$a_1P_m(x_1)+a_2P_m(x_2)+a_3P_m(x_3)+a_4P_m(x_4)+2P_m(x_{n+1})$$ represents every positive integer of the form of 
$$A(m-2)+1, \ A(m-2)+2, \  \cdots , \ A(m-2)+(4+a_4)$$ with $x_4 \in \{0,1\}$.
On the other hand, by using Lemma \ref{ml} (2) again, we may see that for each $1 \le a_4 \le 4$, 
$$a_1P_m(x_1)+a_2P_m(x_2)+a_3P_m(x_3)+2P_m(x_{n+1})$$ represents every positive integer of the form of 
$$\begin{cases}A(m-2)-a_4 & \text{ for } A > a_4\\
A(m-2) & \text{ for } A \le a_4(<5)
 \end{cases}$$
which yields that 
$$a_1P_m(x_1)+a_2P_m(x_2)+a_3P_m(x_3)+a_4P_m(x_4)+2P_m(x_{n+1})$$ represents every positive integer of the form of $$A(m-2)$$
with $x_4 \in \{0,1\}$.
Over all, we obtain  that
$$a_1P_m(x_1)+a_2P_m(x_2)+a_3P_m(x_3)+a_4P_m(x_4)+2P_m(x_{n+1})$$
represents every positive integer of the form of 
$$A(m-2), \ A(m-2)+1, \cdots , \ A(m-2)+(a_1+a_2+a_3+a_4+1).$$
It completes the proof by Corollary \ref{rmk}.

(2) By Lemma \ref{ml} (3), $$a_1P_m(x_1)+a_2P_m(x_2)+a_3P_m(x_3)+3P_m(x_{n+1})$$ represents every positive integer of the form of 
$$A(m-2)+1, \ A(m-2)+2, \cdots, \ A(m-2)+6.$$
It completes the proof by Corollary \ref{rmk}.

(3) One may show this similarly with the above by using Lemma \ref{ml} (3).

(4) One may show this similarly with above by using Lemma \ref{ml} (5) when $a_4\not=4$.

Now consider $(a_1,a_2,a_3,a_4)=(1,2,2,4)$. 
By Lemma \ref{ml} (3),
$$a_2P_m(x_2)+a_3P_m(x_3)+a_4P_m(x_4)+6P_m(x_{n+1})$$
would represent every positive integer of the form of
$$2A(m-2)+2,\ 2A(m-2)+4, \ \cdots, \ 2A(m-2)+12$$
where $A \in \N_0$.
So we may see that 
$$a_1P_m(x_1)+a_2P_m(x_2)+a_3P_m(x_3)+a_4P_m(x_4)+6P_m(x_{n+1})$$
represents the positive integers 
$$2A(m-2)+3,\ 2A(m-2)+5, \ \cdots, \ 2A(m-2)+13$$
with $x_1=1$,
$$(2A+3)(m-2)+2,\ (2A+1)(m-2)+4, \ \cdots, \ (2A+1)(m-2)+12$$ 
with $x_1 \in \{-2,2\}$,
and
$$(2A+1)(m-2)+1,\ (2A+1)(m-2)+3, \ \cdots, \ (2A+1)(m-2)+11$$
with $x_1 =-1$
where $A \in \N_0$.
Since $(m-2)+2=m$ is also clearly represented by $a_1P_m(x_1)+a_2P_m(x_2)+a_3P_m(x_3)+a_4P_m(x_4)+6P_m(x_{n+1})$, we may conclude that 
$$a_1P_m(x_1)+a_2P_m(x_2)+a_3P_m(x_3)+a_4P_m(x_4)+6P_m(x_{n+1})$$ represents every positive integer of the form of
$$A(m-2)+2,\ A(m-2)+3, \ \cdots, \ A(m-2)+12.$$
It completes the proof by Corollary \ref{rmk}.

(5) One may show this similarly with the above by using Lemma \ref{ml} (3) (resp. Lemma \ref{ml} (5)) when $a_4\not=7$ (resp. when $a_4=7$).

(6) One may show this similarly with the above by using Lemma \ref{ml} (4) when $a_4\not=8$ by showing that 
$$a_1P_m(x_1)+a_2P_m(x_2)+a_3P_m(x_3)+a_4P_m(x_4)+P_m(x_{n+1})$$
represents every positive integer of the form of 
$$A(m-2)-1,\ A(m-2),\ A(m-2)+1, \ \cdots, \ A(m-2)+(a_1+a_2+a_3+a_4).$$
For $(a_1,a_2,a_3,a_4)=(1,2,4,8)$, one may show that 
$$a_1P_m(x_1)+a_2P_m(x_2)+a_3P_m(x_3)+a_4P_m(x_4)+2P_m(x_{n+1})$$
represents every positive integer of the form of
$$A(m-2)-1,\ A(m-2),\ A(m-2)+1, \ \cdots, \ A(m-2)+(a_1+a_2+a_3+a_4)$$
through similar processing with the case $(a_1,a_2,a_3,a_4)=(1,2,2,4)$ in (4) by using Lemma \ref{ml} (4).
\end{proof}

\vskip 1em

\begin{lem} \label{mml'}
Let an $m$-gonal form $F_m(\mathbf x)=a_1P_m(x_1)+\cdots +a_nP_m(x_n)$ with $a_1 \le \cdots \le a_n$ represents every positive integer up to $m-4$.

\noindent $(1)$ When $(a_1,a_2)=(1,1)$, $$F_m(\mathbf x)+2P_m(x_{n+1})+3P_m(x_{n+2})$$  is universal.

\noindent $(2)$ When $(a_1,a_2)=(1,2)$, $$F_m(\mathbf x)+P_m(x_{n+1})+3P_m(x_{n+2})$$ is universal.
\end{lem}
\begin{proof}
By Lemma \ref{ml} (3), since $\left<1,1,2,3\right>_m$ represents every positive integer of the form of 
$$A(m-2)+1, \ A(m-2)+2, \cdots, \ A(m-2)+6,$$
we may obtain this by Proposition \ref{rmk}.
\end{proof}

\vskip 1em

\begin{rmk}
\noindent $(1)$ 
Theorem \ref{main thm} directly follows from Lemma \ref{mml} and Lemma \ref{mml'} for $m \not=5,6$.
For $m=5$ and $6$, since $\left<1,1,1\right>_5$ and $\left<1,1,2\right>_6$ are universal, we may complete the proof of Theorem \ref{main thm}.

\noindent $(2)$ In order for an $m$-gonal form $F_m(\mathbf x)=a_1P_m(x_1)+\cdots +a_nP_m(x_n)$ with $a_1 \le \cdots \le a_n$ to represent every positive integer up to $m-4$, the coefficients should satisfy (\ref{b}).
This yields that the minimal rank for an $m$-gonal form which represents every positive integer up to $m-4$ is $\ceil{\log_2(m-3)}$.
Of course, the minimal rank for universal $m$-gonal form would also be greater than or equal to $\ceil{\log_2(m-3)}$.
The author determined the minimal rank for universal $m$-gonal form for sufficiently large $m$ in her thesis \cite{P} by showing Theorem \ref{min uni rank}.
\begin{thm}\label{min uni rank}
For all sufficiently large $m > 2 \left((2C+\frac{1}{4})^{\frac{1}{4}}+\sqrt{2}\right)^2$, the minimal rank for universal $m$-gonal form is 
$$ \begin{cases}\ceil{\log_2(m-3)}+1& \text{ when } -3 \le 2^{\ceil{\log_2(m-3)}}-m\le 1\\
\ceil{\log_2(m-3)} & \text{ when } \ \quad 2 \le 2^{\ceil{\log_2(m-3)}}-m
\end{cases}$$
where $C$ is the constant in Theorem \ref{c}.
\end{thm}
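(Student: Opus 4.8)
The plan is to set $k:=\ceil{\log_2(m-3)}$ and to prove three things in turn: (i) every universal $m$-gonal form has rank at least $k$; (ii) in the range $-3\le 2^{k}-m\le 1$ the minimal rank is exactly $k+1$; and (iii) in the range $2^{k}-m\ge 2$ the minimal rank is exactly $k$. Part (i) is immediate from the discussion around $(\ref{b})$: a universal form represents every positive integer up to $m-4$, so after sorting its coefficients it satisfies $(\ref{b})$, and since $(\ref{b})$ forces $a_i\le 2^{i-1}$ and hence $a_1+\cdots+a_n\le 2^{n}-1$, representing $m-4$ already forces $2^{n}-1\ge m-4$, i.e. $n\ge k$. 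So only the two upper bound constructions and the sharper lower bound of (ii) remain.

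For the upper bound in (ii) I would take the rank-$k$ form $F_m=\langle 1,2,4,\dots,2^{k-2},\,m-3-2^{k-1}\rangle_m$. Since $1\le m-3-2^{k-1}\le 2^{k-1}$, its values at $\{0,1\}$-tuples already exhaust $0,1,\dots,m-4$, so $F_m$ represents every positive integer up to $m-4$, and its sorted coefficient sequence begins with one of the triples listed in $(\ref{3})$; hence Theorem \ref{main thm} (via Lemma \ref{mml}) supplies a single unary term $aP_m(x_{n+1})$ with $F_m+aP_m(x_{n+1})$ universal of rank $k+1$. For the lower bound in (ii) I would show no rank-$k$ form is universal: by part (i) such a form is $\langle a_1,\dots,a_k\rangle_m$ with $a_1+\cdots+a_k\ge m-4$, and in this range $m-4\ge 2^{k}-5$, so the escalation inequalities of $(\ref{b})$ together with near-maximality of the coefficient sum pin $(a_1,\dots,a_k)$ down to a short explicit list of shapes, each agreeing with $(1,2,4,\dots,2^{k-1})$ outside a bounded tail; for each such shape I would produce one positive integer it fails to represent, the obstruction sitting in a gap between two consecutive clusters $\{P_m(x),P_m(1-x)\}$ of generalized $m$-gonal numbers, where the tightness $2^{k}\le m+1$ is precisely what prevents that gap from being bridged by disjoint sub-sums of the $a_i$.

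For the upper bound in (iii) I would exhibit a genuinely universal form of rank $k$. When $m-3$ is not too close to $2^{k}$, start the coefficient sequence with a class-number-one quaternary prefix from Lemma \ref{ml}, for instance $(1,1,1,1)$ or $(1,1,2,4)$, and then choose the remaining $k-4$ coefficients by the escalation rule so that $a_1+\cdots+a_k$ first reaches or exceeds $m-4$ at the $k$-th step; Proposition \ref{l1}, applied with $i=4$ and with $G_m$ the quaternary prefix, fed by Lemma \ref{ml}, then shows this rank-$k$ form is already universal (no extra unary piece needed, since the prefix is among the coefficients). When $m-3$ is close to $2^{k}$ (still with $2^{k}-m\ge 2$), take instead the near-maximal escalator $\langle 1,2,4,\dots,2^{k-1}\rangle_m$, or a small perturbation of it, which satisfies $(\ref{b})$, and prove it universal directly: by Theorem \ref{c} it suffices to represent every positive integer up to $C(m-2)$, which one does by writing $N=q(m-2)+B$ and producing an integral solution of the associated system, equivalently representing the binary $\z$-lattice $[\,a_1+\cdots+a_k,\,B,\,2A+B\,]$ by $\langle a_1,\dots,a_k\rangle$; the hypothesis $m>2\left((2C+\frac{1}{4})^{\frac{1}{4}}+\sqrt2\right)^2$ is exactly what makes the accompanying sub-sum and size estimates valid.

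The hard part will be (iii): one needs a rank-$k$ form that is universal outright rather than universal after adjoining one more unary piece, and it is this that forces the direct representation analysis up to $C(m-2)$ and the explicit lower bound on $m$. The near-maximal sub-case of (iii), with its careful bookkeeping of which residue classes of $N$ are hit, and the finite but delicate shape-by-shape refutation of rank-$k$ universality in (ii), are the other points requiring care; once the three estimates are in place, combining them gives the stated value of the minimal rank in each of the two ranges.
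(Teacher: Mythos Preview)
The paper does not prove Theorem~\ref{min uni rank} at all: its entire proof is the single line ``See \cite{P}.'' The result is imported from the author's thesis, so there is no argument in the present paper to compare your proposal against.

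As for the soundness of your outline on its own: parts (i) and the upper bound of (ii) are fine and indeed follow from the material in this paper (the escalation inequalities \eqref{b} and Theorem~\ref{main thm}). The lower bound of (ii) and all of (iii), however, are only indicated, and your concrete suggestion for the easy sub-case of (iii) does not work as stated. You propose to take a quaternary prefix such as $(1,1,1,1)$ or $(1,1,2,4)$, apply Lemma~\ref{ml} to it, and then invoke Proposition~\ref{l1} with $i=4$ to conclude universality of the rank-$k$ form. But Proposition~\ref{l1} requires the prefix $G_m$ to represent $a_1+\cdots+a_4+1$ \emph{consecutive} integers, whereas Lemma~\ref{ml} never yields such runs from a bare quaternary form: part~(1) hits only odd $B$, part~(4) misses an arithmetic progression of $B$ modulo $8$, and so on. In Lemma~\ref{mml} this is repaired precisely by adjoining the extra unary piece $a_{n+1}P_m(x_{n+1})$, which you are not allowed to do in (iii). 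So the genuine construction of a rank-$k$ universal form cannot be read off from the lemmas of this paper and really does require the direct analysis (up to $C(m-2)$, with the stated lower bound on $m$) that you defer to your second paragraph of (iii); that is where the substance of \cite{P} lies.
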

\begin{proof}
See \cite{P}.
\end{proof}
But it is unknown the minimal rank for universal $m$-gonal form for the remaining $m \le 2 \left((2C+\frac{1}{4})^{\frac{1}{4}}+\sqrt{2}\right)^2$.
On the other hand, Theorem \ref{main thm} (1) may give a quite reasonable bound on the minimal rank for universal $m$-gonal form for $m\ge12$.
\end{rmk}

\vskip 1em

\begin{thm}
For $m\ge 12$, the minimal rank for universal $m$-gonal form would be
$$ \begin{cases}\ceil{\log_2(m-3)}+1& \text{ when } -3 \le 2^{\ceil{\log_2(m-3)}}-m\le 1\\
\ceil{\log_2(m-3)} \text{ or } \ceil{\log_2(m-3)}+1 & \text{ when } \ \quad 2 \le 2^{\ceil{\log_2(m-3)}}-m.
\end{cases} $$
\end{thm}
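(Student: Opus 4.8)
The plan is to combine two already-established facts. From Theorem~\ref{main thm}, for every $m \ge 12$ any $m$-gonal form that represents every positive integer up to $m-4$ becomes universal after adjoining one more unary piece. From Remark~2 above, the minimal rank of an $m$-gonal form representing every positive integer up to $m-4$ is exactly $\ceil{\log_2(m-3)}$, because the escalation conditions (\ref{b}) force $a_1 + \cdots + a_i \le 2^i - 1$ at each step while also requiring $a_1 + \cdots + a_n \ge m-4$, so one needs $2^n - 1 \ge m - 4$, i.e. $n \ge \ceil{\log_2(m-3)}$; conversely $n = \ceil{\log_2(m-3)}$ with coefficients $1, 1, 2, 4, \dots, 2^{n-2}$ (truncated appropriately) achieves the bound.

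First I would record the lower bound: any universal $m$-gonal form in particular represents every positive integer up to $m-4$, so its rank is at least $\ceil{\log_2(m-3)}$. This gives the ``$\ceil{\log_2(m-3)}$ or $\ceil{\log_2(m-3)}+1$'' lower end in the second case and part of the first case. Next I would produce the matching upper bound: take an $m$-gonal form of rank $\ceil{\log_2(m-3)}$ that represents every positive integer up to $m-4$ (it exists by the escalation construction), and apply Theorem~\ref{main thm} to obtain a universal form of rank $\ceil{\log_2(m-3)}+1$. This already proves the statement in the case $2 \le 2^{\ceil{\log_2(m-3)}} - m$: the minimal rank is either $\ceil{\log_2(m-3)}$ or $\ceil{\log_2(m-3)}+1$.

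The first case, $-3 \le 2^{\ceil{\log_2(m-3)}} - m \le 1$, requires the sharper claim that rank $\ceil{\log_2(m-3)}$ is actually \emph{impossible}, so the minimal rank is exactly $\ceil{\log_2(m-3)}+1$. Here the point is that when $2^n - 1$ is only barely at least $m-4$ (precisely, $2^{\ceil{\log_2(m-3)}} - m \in \{-3,-2,-1,0,1\}$, equivalently $m - 4 \le 2^n - 1 \le m$), an $n$-ary form reaching up to $m-4$ must have its coefficients pinned down to the essentially unique escalation sequence $\qf{1,1,2,4,\dots,2^{n-2}}_m$ or a close relative, and such a form, while representing $1,\dots,m-4$, fails to be universal — one exhibits a single positive integer it misses. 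I would isolate the small list of possible coefficient vectors allowed by (\ref{b}) in this tight regime, and for each show non-universality, e.g. by a local obstruction or by checking it fails to represent some specific value just above $m-4$ (this is exactly the $\gamma_m > m-4$ phenomenon recorded after Theorem~\ref{c}).

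I expect the main obstacle to be this last step: ruling out rank $\ceil{\log_2(m-3)}$ in the tight case. The difficulty is that it is a statement about \emph{all} forms of that rank reaching $m-4$, not just one, so I must first show the escalation conditions leave only finitely many (in fact very few) coefficient patterns, and then defeat each one uniformly in $m$. I would handle this by noting that in the regime $2^n - 1 \le m$ the inequality $a_1 + \cdots + a_n \ge m - 4 \ge 2^n - 5$ combined with $a_{i+1} \le a_1 + \cdots + a_i + 1 \le 2^i$ forces all but a bounded number of the $a_i$ to equal their maximal value $2^{i-1}$, reducing to a finite case analysis; then for the doubling-type form $\qf{1,1,2,\dots,2^{n-2}}_m$ one argues it cannot represent, say, $2(m-2) + c$ for a suitable small residue $c$ depending on $m \bmod 2$, mirroring the structure of the obstructions already seen in Lemma~\ref{ml}. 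The remaining patterns differ from this one in only finitely many coordinates and succumb to the same argument. Finally I would assemble the two cases into the displayed dichotomy.
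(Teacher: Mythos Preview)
Your proposal is correct and follows essentially the same structure as the paper's own proof: derive the lower bound $\ceil{\log_2(m-3)}$ from the escalation conditions (\ref{b}), derive the upper bound $\ceil{\log_2(m-3)}+1$ from Theorem~\ref{main thm}, and then in the tight regime $-3 \le 2^{\ceil{\log_2(m-3)}}-m \le 1$ rule out rank $\ceil{\log_2(m-3)}$ directly. The paper is actually briefer than you on that last step --- it simply appeals to ``similar processing with the proof of Theorem~\ref{min uni rank}'' from the author's thesis --- so your sketch of the finite case analysis (pinning down the near-doubling coefficient patterns and exhibiting a missed integer for each) is more explicit than what appears here, though whether it coincides with the thesis argument in detail cannot be judged from this paper alone.
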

\begin{proof}
Note that the minimal rank for an $m$-gonal form which represents every positive integer up to $m-4$ is $\ceil{\log_2(m-3)}$.
So we have the minimal rank for universal $m$-gonal form is also greater than or equal to $\ceil{\log_2(m-3)}$
and by Theorem \ref{main thm}, the minimal rank for universal $m$-gonal form is less than or equal to $\ceil{\log_2(m-3)}+1$. 
When $-3 \le 2^{\ceil{\log_2(m-3)}}-m\le 1$, one may deduce that there is no universal $m$-gonal form of rank $\ceil{\log_2(m-3)}$ throughout similar processing with the proof of Theorem \ref{min uni rank}. 
This completes the proof.
\end{proof}

\end{document}